\newcommand{\N}{\mathbb{N}}
\newcommand{\F}{\mathbb{F}}
\newcommand{\Sym}{\mathbb{S}}
\newcommand{\Inn}{\mathrm{Inn}}
\newcommand{\Aut}{\mathrm{Aut}}
\numberwithin{equation}{section}
\numberwithin{figure}{section}
\theoremstyle{plain}
\newtheorem{thm}{Theorem}
\theoremstyle{plain}
\newtheorem{lem}[thm]{Lemma}
\newtheorem{cor}[thm]{Corollary}
\theoremstyle{plain}
\newtheorem{pro}[thm]{Proposition}
\newtheorem*{conjecture*}{Conjecture}
\newtheorem{problem}[thm]{Problem}
\theoremstyle{remark}
\theoremstyle{plain}
\newtheorem{exa}[thm]{Example}
\begin{document}

\title{Doubly transitive groups and cyclic quandles}

\begin{abstract}
    We prove that for $n>2$ there exists a quandle of cyclic type of size $n$
    if and only if $n$ is a power of a prime number.  This establishes a
    conjecture of S. Kamada, H. Tamaru and K. Wada.  As a corollary, every
    finite quandle of cyclic type is an Alexander quandle.  We also prove that
    finite doubly transitive quandles are of cyclic type. This establishes a
    conjecture of H. Tamaru.  
\end{abstract}

\author{Leandro Vendramin}
\address{
Departamento de Matem\'atica -- FCEN,
Universidad de Buenos Aires, Pab. I -- Ciudad Universitaria (1428)
Buenos Aires -- Argentina}
\email{lvendramin@dm.uba.ar}

\maketitle

\section*{Introduction}

Quandles are algebraic structures deeply related to the Reidemeister moves of
classical knots.  These structures play an important role in knot theory
because they produce strong knot invariants, see for example \cite{MR1990571},
\cite{MR3079760} and \cite{MR3253967}. The applications of quandles in knot theory
force us to study certain particular classes of quandles. One of these classes
is the class of finite quandles of cyclic type. The idea of studying such
quandles goes as far as \cite{MR2194774}. Quandles of cyclic type were
independently considered in \cite{MR3169460} and \cite{MR3127819}.

In this note we present the proofs of two conjectures related to quadles of
cyclic type.  First we prove the following theorem, conjectured by S.  Kamada,
H.  Tamaru and K. Wada, see  \cite[Conjecture~4.7]{KTW}.

\begin{thm}
    \label{thm:KTW}
    Let $n\geq3$. Then there exists a quandle of size $n$ of cyclic type if and
    only if $n$ is a power of a prime number. 
\end{thm}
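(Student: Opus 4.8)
The plan is to prove the two implications separately: the backward direction is a direct construction, while the forward direction is the heart of the matter and proceeds by extracting a \emph{sharply} $2$-transitive permutation group from the quandle.

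For the easy direction, suppose $n$ is a prime power and let $\F_n$ be the field with $n$ elements. Fix a generator $\omega$ of the cyclic group $\F_n^{\times}$ and give the additive group of $\F_n$ the Alexander quandle structure $x\triangleright y=\omega(x-y)+y$. Writing $S_y\colon x\mapsto x\triangleright y$ for the right translation, one checks that $S_y$ fixes $y$ and is conjugate, via the translation $x\mapsto x-y$, to $S_0\colon x\mapsto \omega x$. Since $\omega$ generates $\F_n^{\times}$, the map $S_0$ permutes $\F_n\setminus\{0\}$ as a single $(n-1)$-cycle; hence so does each $S_y$ on its complement, and the quandle is of cyclic type (here $n\ge 3$ guarantees $\omega\neq 1$). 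I would isolate this construction as a lemma, since it also delivers the corollary that such quandles are Alexander quandles.

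The forward direction is where the real work lies. Writing $S_x(y)=y\triangleright x$ and $G=\Inn(X)=\langle S_x : x\in X\rangle$, I would first record the conjugation formula $g S_y g^{-1}=S_{g(y)}$, valid for every $g\in G$ because each inner map is a quandle automorphism. Next I would observe that cyclic type forces $G$ to be $2$-transitive: for $n\ge 3$ any two distinct points lie in a common cycle of some $S_z$ (choose $z$ different from both), giving transitivity, and the stabilizer $G_x$ already contains the single $(n-1)$-cycle $S_x$, which is transitive on $X\setminus\{x\}$. The crucial step is then to pin down the point stabilizer. Set $H=G_x$ and $z=S_x\in H$. Because $G$ acts faithfully on $X$, the group $H$ acts faithfully on $\Omega=X\setminus\{x\}$; and by the conjugation formula $hzh^{-1}=S_{h(x)}=S_x=z$ for all $h\in H$, so $z$ is central in $H$. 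But $z$ is a full $(n-1)$-cycle on $\Omega$, and the centralizer of such a cycle inside the symmetric group on $\Omega$ is exactly the cyclic group it generates. Hence $H$, being faithful on $\Omega$ and centralizing $z$, is contained in $\langle z\rangle$; since $z\in H$, we conclude $H=\langle z\rangle$, regular of order $n-1$.

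This identifies $G$ as a sharply $2$-transitive group of degree $n$: it is $2$-transitive and $|G|=|X|\,|H|=n(n-1)$. By Zassenhaus's classification of finite sharply $2$-transitive groups, such a group possesses an elementary abelian regular normal subgroup, so its degree $n$ is a power of a prime. I expect the main obstacle to be precisely the centralizer argument identifying $H$ as cyclic: the combination of the quandle-theoretic centrality $z\in Z(H)$ with the permutation-theoretic fact that a full cycle is self-centralizing is exactly what collapses the generic $2$-transitive situation to the sharply $2$-transitive one, after which Zassenhaus's theorem finishes the proof without any appeal to the full classification of finite simple groups.
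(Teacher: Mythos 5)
Your proof is correct and follows essentially the same route as the paper: the backward direction is the paper's Proposition on Alexander quandles over $\F_q$ with a generator of $\F_q^{\times}$, and your key step identifying $G_x=\langle S_x\rangle$ via the centrality of $S_x$ in the stabilizer together with the self-centralizing property of a full $(n-1)$-cycle is exactly the paper's Lemma on cyclic stabilizers. Your packaging of the conclusion as ``sharply $2$-transitive, hence prime power degree by Zassenhaus'' is just another name for the paper's ``doubly transitive Frobenius group,'' so the two arguments coincide.
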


K. Wada independently proved that cyclic quandles with a prime power size are
Alexander quandles \cite{MR3379001}.  Theorem~\ref{thm:KTW} yields the
following stronger result.

\begin{cor}
    \label{cor:cyclic_is_affine}
		Let $X$ be a finite quandle of cyclic type. Then $|X|$ is a power of a
		prime number and $X$ is an Alexander simple quandle over the field with
		$|X|$ elements. 
\end{cor}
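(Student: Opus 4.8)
The plan is to pass to the inner automorphism group and exploit $2$-transitivity. For $y\in X$ write $R_y\colon x\mapsto x\triangleright y$, so that $\Inn(X)=\langle R_y:y\in X\rangle\le\Aut(X)$, and by the cyclic type hypothesis each $R_y$ fixes $y$ and cycles $X\setminus\{y\}$ as a single $(n-1)$-cycle, where $n=|X|$. First I would check that $\Inn(X)$ is $2$-transitive: for $n\ge3$ and distinct $a,b$, choosing $z\notin\{a,b\}$ puts $a,b$ in the single $\langle R_z\rangle$-orbit $X\setminus\{z\}$, so $\Inn(X)$ is transitive, while $\langle R_y\rangle\le\Inn(X)_y$ is transitive on $X\setminus\{y\}$; together these give $2$-transitivity. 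Simplicity is then immediate: any quandle congruence $\theta$ satisfies $a\,\theta\,b\Rightarrow R_c(a)\,\theta\,R_c(b)$, so every $R_c$ preserves the block partition of $\theta$, which is therefore $\Inn(X)$-invariant; but a $2$-transitive group of degree $n\ge3$ is primitive, so only the trivial partitions occur and $X$ is simple.

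By Theorem~\ref{thm:KTW} we already know $n=q=p^{k}$ is a prime power, so $\Inn(X)$ is a $2$-transitive group of prime-power degree in which a point stabiliser contains a cyclic subgroup, namely $\langle R_y\rangle$, of order $q-1$ acting \emph{regularly} on the remaining $q-1$ points. The crux, and the hardest step, is to prove that $\Inn(X)$ is of \emph{affine} type, i.e. $\Inn(X)=V\rtimes G_0$ with $V\cong\F_p^{k}$ a regular elementary abelian normal subgroup and $G_0\le\mathrm{GL}(V)$. This cannot be read off from the degree alone, since several almost simple $2$-transitive groups (for instance suitable $\PSL$ actions or the Mathieu groups) also have prime-power degree; what rules them out is the rigid extra datum that a point stabiliser contains a regular cyclic subgroup of order $q-1$, together with the constraint that the $R_y$ form a single conjugacy class of such $(q-1)$-cycles indexed by the points of $X$. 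I expect this to require running through the classification of finite $2$-transitive groups and eliminating each almost simple family in turn, which is essentially the same case analysis underlying the proof of Theorem~\ref{thm:KTW}, and this is where the real work lies.

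Granting the affine reduction, I would finish formally. The stabiliser of a chosen point $0\in X$ is $G_0\le\mathrm{GL}(V)$, and $R_0$ fixes $0$, so $T:=R_0\in G_0$ is linear. Cyclic type says $\langle T\rangle$ is transitive, hence regular since $|T|=q-1$, on $V\setminus\{0\}$, i.e. $T$ is a Singer cycle of $\mathrm{GL}_k(\F_p)$. Identifying $V$ with $\F_q$ so that $T$ becomes multiplication by a generator $\omega$ of $\F_q^{\times}$, the standard identity $R_y=t_yR_0t_y^{-1}$, with $t_y\in V$ the translation sending $0$ to $y$, yields $x\triangleright y=Tx+(1-T)y=\omega x+(1-\omega)y$. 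Thus $X$ is exactly the Alexander quandle on the field $\F_q$ determined by $\omega$, and combined with the simplicity established in the first paragraph this proves the corollary. The only genuine obstacle is the affine reduction of the second paragraph; the Singer-cycle identification and the resulting Alexander form are then routine.
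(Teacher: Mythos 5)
Your first and third paragraphs are essentially sound (modulo your mirrored convention $R_y$ versus the paper's $\varphi_y$): double transitivity, simplicity via primitivity, and the Singer-cycle computation that converts an affine $\Inn(X)$ into the Alexander structure $x\triangleright y=\omega x+(1-\omega)y$ are all correct. But the step you yourself flag as the crux --- that $\Inn(X)$ is of affine type, i.e.\ has a regular elementary abelian normal subgroup --- is never actually carried out: you only say you ``expect'' it to follow from running through the classification of finite $2$-transitive groups and eliminating the almost simple cases one by one, and you misattribute that case analysis to the proof of Theorem~\ref{thm:KTW}, which in fact uses no such classification (it rests on Lemma~\ref{lem:frobenius} and Wielandt's results on doubly transitive Frobenius groups, Theorem~\ref{thm:n=p^m}). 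As written, the argument has a hole exactly where the real content sits.

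The gap is fillable, and by lighter means than you propose. One option is the paper's route: having shown that $X$ is simple of prime power order, invoke the Andruskiewitsch--Gra\~na classification of simple quandles, \cite[Thm.~3.9]{MR1994219}, which says precisely that a simple quandle of order $p^m$ is an Alexander simple quandle over $\F_{p^m}$; this replaces your second and third paragraphs entirely. A second option stays inside your framework but needs no classification of $2$-transitive groups: Lemma~\ref{lem:cyclic_stabilizer} gives $\Inn(X)_x=\langle\varphi_x\rangle$, Lemma~\ref{lem:frobenius} then makes $\Inn(X)$ a doubly transitive Frobenius group, its Frobenius kernel is a regular normal subgroup, and a regular normal (hence minimal normal) subgroup of a doubly transitive group is elementary abelian by Theorem~\ref{thm:burnside}. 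That is exactly the affine reduction you need, obtained from results already established in the paper and with no appeal to the classification of finite simple groups.
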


Finally, using the classification of simple groups 
we prove the following theorem. 

\begin{thm}
    \label{thm:2transitive_is_alexander}
		Every finite doubly transitive quandle is an Alexander simple quandle.
\end{thm}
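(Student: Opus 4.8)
The plan is to reduce everything to showing that a finite doubly transitive quandle is of cyclic type; the conclusion then follows at once from Corollary~\ref{cor:cyclic_is_affine}. Throughout write $G=\Inn(X)$ for the inner group, which by hypothesis acts doubly transitively on $X$, fix a point $x_0\in X$, and set $\zeta=S_{x_0}$, the inner automorphism $y\mapsto x_0\triangleright y$. The first, purely elementary, step records the group-theoretic shadow of the quandle axioms. Since $x_0\triangleright x_0=x_0$, the permutation $\zeta$ fixes $x_0$; and since every $h\in G_{x_0}$ is an automorphism fixing $x_0$, one has $h\zeta h^{-1}=S_{h(x_0)}=\zeta$, so $\zeta\in Z(G_{x_0})$ and hence $G_{x_0}\le C_G(\zeta)$. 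The quandle is nontrivial (otherwise $G=1$ would not be transitive), so $\zeta\neq\id$. Because $\zeta$ is central in $G_{x_0}$ and $G_{x_0}$ is transitive on the $n-1$ points of $X\setminus\{x_0\}$, all orbits of $\langle\zeta\rangle$ on $X\setminus\{x_0\}$ share a common size $m$; as $\zeta\neq\id$ we get $m\ge 2$, so $\zeta$ fixes $x_0$ and nothing else. Consequently any $g\in C_G(\zeta)$ must fix the unique fixed point $x_0$, giving $C_G(\zeta)=G_{x_0}$ (a maximal subgroup, of index $n$), and $X$ is of cyclic type precisely when $m=n-1$, i.e. when $\langle\zeta\rangle$ is transitive on $X\setminus\{x_0\}$.

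I would then split according to the classification of finite doubly transitive groups applied to $G$. In the affine case $G=V\rtimes G_0$ with $V=\F_p^d$ the regular elementary abelian socle, $x_0=0$ and $G_0=G_{x_0}$. Here the argument is self-contained: projecting along $\pi\colon G\to G/V\cong G_0$ and using that $\bar\zeta=\pi(\zeta)$ is central, the relation $G=\langle\zeta^{G}\rangle$ (valid because $\Inn(X)$ is generated by the $S_x$, all $G$-conjugate to $\zeta$) forces $G/V=\langle\bar\zeta^{\,G/V}\rangle=\langle\bar\zeta\rangle$, so $G_0=\langle\zeta\rangle$ is cyclic. Double transitivity makes $G_0$ transitive on $V\setminus\{0\}$, hence $\langle\zeta\rangle$ is transitive there and $\zeta$ is a single $(n-1)$-cycle: $X$ is of cyclic type. (When $\zeta$ is linear, conjugating by the translation $t_x$ exhibits $S_x\colon v\mapsto \zeta v+(\id-\zeta)x$, the Alexander form with $T=\zeta$, in line with Corollary~\ref{cor:cyclic_is_affine}.)

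The remaining case is where $G$ has nonabelian simple socle, and this is where I would invoke the classification of doubly transitive groups with simple socle \cite{MR599634}. The structural constraint to exploit is the one isolated above: $G$ would contain a nontrivial $\zeta$ with $C_G(\zeta)=G_{x_0}$ equal to a point stabilizer — equivalently $Z(G_{x_0})$ contains an element with a single fixed point — and $G=\langle\zeta^{G}\rangle$ is generated by the size-$n$ conjugacy class of $\zeta$. I would run through the families $\Alt_n$, $\PSL_d(q)$, $\PSU_3(q)$, $\Sz(q)$, $\Ree(q)$, $\Sp_{2d}(2)$, the remaining rank-one and higher-rank groups, and the finitely many sporadic actions, and check in each that no such $\zeta$ exists: either $Z(G_{x_0})$ is trivial (as for the Frobenius point stabilizer in $\PSL_2(q)$, or as one sees from the self-centralizing stabilizer $\Alt_{n-1}$ in $\Alt_n$), or every nontrivial central element of the stabilizer fixes additional points (as for the central elements of a parabolic in $\PSL_d(q)$, which fix an entire subspace of the projective point set), contradicting $\mathrm{Fix}(\zeta)=\{x_0\}$. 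Ruling out the simple-socle case leaves only the affine case, where $X$ is of cyclic type, completing the proof.

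I expect the main obstacle to be exactly this last, case-by-case verification. For each family of almost simple doubly transitive groups one must identify the point stabilizer, analyse the center of its parabolic or Frobenius structure, and compute the fixed-point sets of its central elements, with particular care for the small exceptional degrees and the sporadic actions, where the generic ``trivial center or extra fixed points'' dichotomy may degenerate and has to be checked by hand.
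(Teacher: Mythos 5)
Your structural setup is sound and in places sharper than the paper's: the observation that $\zeta=\varphi_{x_0}$ is a nontrivial central element of $G_{x_0}$ whose orbits on $X\setminus\{x_0\}$ all have a common size $m\geq 2$, hence that $\mathrm{Fix}(\zeta)=\{x_0\}$ and $C_G(\zeta)=G_{x_0}$, is correct and gives you for free that $X$ is a conjugacy class of size $n$ (the paper obtains this less directly, via simplicity of $X$ and Joyce's results). Your affine case is also correct and genuinely different from the paper: projecting $G=V\rtimes G_0\to G/V$ and using $G=\langle\zeta^G\rangle$ together with centrality of $\bar\zeta$ to force $G_0=\langle\zeta\rangle$ cyclic and transitive on $V\setminus\{0\}$ is a clean, self-contained way to get cyclic type, whereas the paper simply invokes the Andruskiewitsch--Gra\~na classification once solvability of $\Inn(X)$ is known.

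The gap is the simple-socle half, which is the bulk of the actual proof and which you leave at the level of ``I would run through the families and check.'' Two concrete problems. First, your proposed dichotomy (``$Z(G_{x_0})$ trivial, or every nontrivial central element fixes extra points'') is asserted, not verified, and it is not obviously uniform: the paper has to use three genuinely different mechanisms --- trivial center of the stabilizer for $\PSL(d,q)$, $\Sz(q)$ and $\Ree(q)$ (the last requiring an explicit matrix computation); absence of a conjugacy class of size $q^3+1$, read off from centralizer orders, for $\PSU(3,q)$; and for $\Sp(2d,2)$ an argument that first forces $X$ to be of cyclic type (via primitivity of $G_\alpha$ on $\Omega\setminus\{\alpha\}$ and Wielandt's theorem on central subgroups of primitive groups) and then derives a contradiction from Theorem~\ref{thm:KTW} together with the fact that nonabelian simple groups have no nontrivial conjugacy classes of prime power size. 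None of these verifications is routine enough to wave at. Second, you have skipped the reduction the paper performs before consulting the classification: using \cite[Lemma~2]{MR682881} and \cite[Prop.~3]{MR682881} one shows $\Inn(X)$ \emph{equals} its (simple) socle, so only the socles in Table~\ref{tab:cameron} need to be excluded. Without that step your case analysis must range over all almost simple doubly transitive groups between the socle and its automorphism group (e.g.\ the various $P\Gamma L$'s and $P\Sigma L$'s), whose point stabilizers and centers differ from those of the socle, so your list of cases is strictly larger than the paper's and your sketched criteria would have to be re-examined for each such overgroup. Until the case-by-case elimination is actually carried out, the proof is incomplete.
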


The theorem gains in interest if we know that doubly transitive Alexander
quandles are of cyclic type. This was proved by K. Wada \cite{MR3379001}.  Then
one immediately obtains the following corollary, which proves a conjecture of
H.  Tamaru, see \cite[Conjecture~5.1]{MR3127819}.

\begin{cor}
    \label{cor:2transitive_is_cyclic}
		Every finite doubly transitive quandle is of cyclic type.
\end{cor}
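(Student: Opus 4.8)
The plan is to deduce the corollary in essentially one line by combining Theorem~\ref{thm:2transitive_is_alexander} with the cited result of Wada. Given a finite doubly transitive quandle $X$, Theorem~\ref{thm:2transitive_is_alexander} shows directly that $X$ is an Alexander simple quandle. It then remains only to invoke the fact that a doubly transitive Alexander quandle is of cyclic type, which is exactly the statement proved in \cite{Wada}. All of the substantive work is carried out in the proof of Theorem~\ref{thm:2transitive_is_alexander}, so there is no genuine obstacle at this final stage; the corollary is a formal consequence of the two results.

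Should one prefer a self-contained argument in place of citing \cite{Wada}, I would verify directly that the Alexander quandle furnished by Theorem~\ref{thm:2transitive_is_alexander} is of cyclic type. Such a quandle has underlying set $\F_q$ with operation $x\triangleright y=(1-t)x+ty$ for some $t\in\F_q^\times$ with $t\neq1$; each inner automorphism $\sigma_x\colon y\mapsto ty+(1-t)x$ is affine with linear part $t$, so the inner automorphism group is the affine group $\F_q\rtimes\langle t\rangle\leq\Aff(\F_q)$. The key step is to observe that double transitivity forces the linear part to act transitively on $\F_q\setminus\{0\}$, whence $\langle t\rangle=\F_q^\times$ and $t$ is a primitive root.

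Once $t$ generates $\F_q^\times$, the stabilizer map $\sigma_0\colon y\mapsto ty$ is multiplication by a generator of $\F_q^\times$, hence a single cycle of length $q-1$ on $\F_q\setminus\{0\}$; by homogeneity the same holds for every $\sigma_x$ on $X\setminus\{x\}$, which is precisely the definition of cyclic type. The only point requiring care in this alternative route is the elementary but essential fact that a $2$-transitive affine group over $\F_q$ must contain the full multiplicative group $\F_q^\times$ as its linear part, and this is where I expect the real content of Wada's lemma to lie.
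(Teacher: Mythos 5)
Your main argument is exactly the paper's proof: the corollary is obtained by combining Theorem~\ref{thm:2transitive_is_alexander} with Wada's result that doubly transitive Alexander quandles are of cyclic type. Your optional self-contained alternative is also sound --- it is essentially Proposition~\ref{pro:affine_cyclic} plus the observation that the point stabilizer in $\Inn(X)$ is $\langle t\rangle$ acting by multiplication, so double transitivity forces $\langle t\rangle=\F_q^\times$ --- but it is not needed for the corollary as stated.
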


The principal significance of the corollary is that it advances the
classification of $k$-transitive quandles for $k\geq2$. On the other hand, the
classification of finite indecomposable quandles is somewhat out of reach. Thus
the following seems to be an interesting problem. 

\begin{problem}
    Classify finite primitive quandles. 
\end{problem}

The paper is organized as follows. In Section~\ref{section:preliminaries} we
set up notations and terminology, and we review some basic facts about quandles
and permutation groups. Section~\ref{section:KTW} is devoted to prove
Theorem~\ref{thm:KTW} and Corollary~\ref{cor:cyclic_is_affine}. The proof of
the theorem is based on the following observation: the inner group of a finite
quandle of cyclic type is a Frobenius group. The proof of the corollary uses
Theorem~\ref{thm:KTW} and the classification of simple quandles of
Andruskiewitsch and Gra\~na \cite[\S3]{MR1994219}.  In
Section~\ref{section:2transitive_is_alexander} we prove
Theorem~\ref{thm:2transitive_is_alexander}. 
The proof depends on the
classification of simple groups. 

\section{Preliminaries}
\label{section:preliminaries}

Recall that a \emph{quandle} is a set $X$ with a binary operation
$\triangleright\colon X\times X\to X$ such that $x\triangleright x=x$ for all
$x\in X$, the map $\varphi_x\colon X\to X$, $y\mapsto x\triangleright y$, is
bijective for all $x\in X$, and $x\triangleright (y\triangleright
z)=(x\triangleright y)\triangleright (x\triangleright z)$ for all $x,y,z\in X$.
The \emph{inner group} of $X$ is the group $\Inn(X)=\langle \varphi_x\mid x\in
X\rangle$.  The quandle $X$ is \emph{indecomposable} (or \emph{connected}) if
$\Inn(X)$ acts transitively on $X$. From the definition of quandle one
immediately obtains the following lemma.

\begin{lem}
    \label{lem:nontrivial_center}
    Let $X$ be a quandle and $x\in X$. Then $\varphi_x$ is a
    central element of the stabilizer of $x$ in $\Inn(X)$.
\end{lem}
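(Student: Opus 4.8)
The plan is to reduce the statement to a single conjugation identity inside $\Inn(X)$ that follows directly from self-distributivity. First I would rewrite the axiom $x\triangleright(y\triangleright z)=(x\triangleright y)\triangleright(x\triangleright z)$ in terms of the generators $\varphi_x$. Reading both sides as functions of $z$, the left-hand side is $\varphi_x\varphi_y(z)$ while the right-hand side is $\varphi_{x\triangleright y}\varphi_x(z)$. Since $z$ is arbitrary, this yields the operator identity $\varphi_x\varphi_y=\varphi_{\varphi_x(y)}\varphi_x$, and because each $\varphi_x$ is a bijection we may rewrite it as $\varphi_x\varphi_y\varphi_x^{-1}=\varphi_{\varphi_x(y)}$.

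Next I would bootstrap this to arbitrary elements of the inner group: for every $g\in\Inn(X)$ and every $y\in X$ one has $g\varphi_y g^{-1}=\varphi_{g(y)}$. This is exactly the identity just derived when $g=\varphi_x$, and it extends to products and inverses of generators by a short induction, hence holds on all of $\Inn(X)=\langle\varphi_x\mid x\in X\rangle$. Concretely, if the formula holds for $g$ and $h$ then $gh\,\varphi_y\,(gh)^{-1}=g\varphi_{h(y)}g^{-1}=\varphi_{g(h(y))}=\varphi_{(gh)(y)}$, and applying it with $g^{-1}(y)$ in place of $y$ gives the statement for $g^{-1}$.

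With this identity in hand the conclusion is essentially a one-liner. Since $\varphi_x(x)=x\triangleright x=x$ by the idempotency axiom, $\varphi_x$ itself lies in the stabilizer $\Inn(X)_x$. For any $g\in\Inn(X)_x$ we have $g(x)=x$, so the conjugation formula gives $g\varphi_x g^{-1}=\varphi_{g(x)}=\varphi_x$; that is, $g$ commutes with $\varphi_x$. As $g$ ranges over the entire stabilizer, this shows $\varphi_x$ is a central element of $\Inn(X)_x$.

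I do not expect a serious obstacle here. The only real content is the translation of self-distributivity into the conjugation relation $g\varphi_y g^{-1}=\varphi_{g(y)}$, together with the mild bookkeeping needed to see that this relation, verified on the generating set, is stable under both multiplication and inversion and therefore propagates to all of $\Inn(X)$.
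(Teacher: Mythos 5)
Your proof is correct, and it is essentially the argument the paper intends: the paper states this lemma without proof, but the key identity $g\varphi_y g^{-1}=\varphi_{g(y)}$ (hence $f\varphi_x f^{-1}=\varphi_{f(x)}=\varphi_x$ for $f$ in the stabilizer) is exactly what it invokes later, e.g.\ in the proof of Lemma~\ref{lem:cyclic_stabilizer}. Your derivation of that identity from self-distributivity and its extension from generators to all of $\Inn(X)$ is sound and complete.
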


A quandle $X$ is \emph{primitive} if $\Inn(X)$ acts primitively on $X$.  For
$k\geq1$ we say that $X$ is \emph{$k$-transitive} if $\Inn(X)$ acts
$k$-transitively on $X$. It is worth pointing out that $1$-transitive means
indecomposable, and that  $2$-transitive (or \emph{doubly transitive}) quandles
are called two-point homogeneous in \cite{MR3127819}.  A similar argument to
that of \cite[Thm.~9.6]{MR0183775} shows that  doubly transitive quandles are
primitive. Similarly, $(k+1)$-transitive quandles are $k$-transitive for all
$k\geq1$. The following result of McCarron \cite[Prop.~5]{smallhomogeneous}
shows that higher transitivity is a rare phenomenon: the dihedral quandle with
three elements is the unique $3$-transitive quandle.

\begin{lem}[McCarron]
    \label{lem:mccarron}
    Let $k\in\N$ with $k\geq2$ and $X$ be a finite $k$-transitive quandle with
    at least four elements. Then $k\leq2$.
\end{lem}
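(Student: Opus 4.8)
The plan is to argue by contradiction and reduce everything to the centrality statement of Lemma~\ref{lem:nontrivial_center}. Suppose $X$ has at least four elements and is $k$-transitive for some $k\geq 3$. Since a $k$-transitive quandle is $3$-transitive whenever $k\geq 3$, it suffices to derive a contradiction from the assumption that $G=\Inn(X)$ acts $3$-transitively on $X$. Fixing $x\in X$, the stabilizer $G_x$ then acts $2$-transitively on the set $\Omega=X\setminus\{x\}$, which has $|X|-1\geq 3$ elements. The point is that $\varphi_x$ fixes $x$ (because $x\triangleright x=x$), so $\varphi_x\in G_x$, and by Lemma~\ref{lem:nontrivial_center} it lies in the center $Z(G_x)$.

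The heart of the argument is then a purely permutation-group fact: a group $H$ acting $2$-transitively on a set $\Omega$ with $|\Omega|\geq 3$ has trivial center. To prove this I would take $z\in Z(H)$ and note that its fixed-point set is $H$-invariant, since $z(h\omega)=h(z\omega)=h\omega$ whenever $z\omega=\omega$; by transitivity this set is empty or all of $\Omega$, and in the latter case $z=\id$. If instead $z\neq\id$ were fixed-point-free, I would choose $\omega\in\Omega$, set $\omega'=z(\omega)\neq\omega$, and compute for every $h\in H_\omega$ that $h(\omega')=hz(\omega)=zh(\omega)=z(\omega)=\omega'$. Thus $H_\omega$ would fix $\omega'$, contradicting the fact that $2$-transitivity forces $H_\omega$ to act transitively on $\Omega\setminus\{\omega\}$, a set with at least two elements. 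Hence $Z(H)=\{1\}$.

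Applying this with $H=G_x$ and $\Omega=X\setminus\{x\}$ forces $\varphi_x=\id$, and since $x$ is arbitrary the same reasoning gives $\varphi_x=\id$ for every $x$. Consequently $\Inn(X)=\langle \varphi_x\mid x\in X\rangle$ is trivial, which is absurd because it acts transitively on a set of size at least four. This contradiction shows that no $k\geq 3$ can occur, i.e.\ $k\leq 2$.

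The main obstacle I anticipate is not any deep step but locating precisely where the hypothesis $|X|\geq 4$ is indispensable: it is exactly what guarantees $|\Omega|=|X|-1\geq 3$, and this is the inequality that makes the center of the $2$-transitive action trivial. For $|X|=3$ one has $|\Omega|=2$ and the final contradiction collapses, which is consistent with the dihedral quandle on three elements being genuinely $3$-transitive. I would therefore be careful to keep track of the sizes throughout, and to verify at the outset that $\varphi_x\in G_x$ so that Lemma~\ref{lem:nontrivial_center} indeed places it in $Z(G_x)$.
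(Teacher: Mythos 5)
Your proof is correct, but it takes a genuinely different route from the paper's. The paper argues directly with the quandle operation: it picks $x,y$ with $x\triangleright y\notin\{x,y\}$ (possible since an indecomposable quandle with more than one element is nontrivial, and $\varphi_x$ bijective with $\varphi_x(x)=x$ forces $x\triangleright y\ne x$ for $y\ne x$), chooses $z$ outside $\{x,y,x\triangleright y\}$ using $|X|\geq4$, and invokes $3$-transitivity to produce $f\in\Inn(X)$ fixing $x$ and $y$ but sending $x\triangleright y$ to $z$; since $f$ is a quandle automorphism it must fix $x\triangleright y$, a contradiction in two lines. You instead route everything through Lemma~\ref{lem:nontrivial_center}: $3$-transitivity of $\Inn(X)$ makes $G_x$ act (faithfully and) $2$-transitively on $X\setminus\{x\}$, a set of size $\geq3$, and a faithful $2$-transitive action of degree at least $3$ has trivial center, so the central element $\varphi_x$ is the identity for every $x$, killing $\Inn(X)$ and hence transitivity. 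Both arguments are sound and both isolate $|X|\geq4$ as the essential hypothesis (yours via $|X\setminus\{x\}|\geq3$, the paper's via the existence of $z$), correctly leaving room for the $3$-transitive dihedral quandle on three elements. What your approach buys is conceptual uniformity: the mechanism "a nontrivial central element of the point stabilizer obstructs higher transitivity" is exactly the one the paper deploys later in Cases 1, 4 and 5 of the proof of Theorem~\ref{thm:2transitive_is_alexander}. What the paper's approach buys is economy: it needs no permutation-group lemma at all, only the fact that inner maps are quandle homomorphisms.
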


\begin{proof}
		Suppose that $k\geq3$.  Since $X$ is $k$-transitive, it is indecomposable
		and nontrivial.  Thus let $x,y\in X$ such that $|\{x,y,x\triangleright
		y\}|=3$. By assumption, there exists $z\in X\setminus\{x,y,x\triangleright
		y\}$. Since $\Inn(X)$ acts $k$-transitively on $X$ and $k\geq3$, there
		exists $f\in\Inn(X)$ such that $f(x)=x$, $f(y)=y$ and $f(x\triangleright
		y)=z$. Then $x\triangleright y=f(x)\triangleright f(y)=f(x\triangleright
		y)=z$, a contradiction.
\end{proof}

We shall also need the following lemma of \cite{McCarron}. Recall that a
quandle is \emph{simple} if it has no quotients except itself and the
trivial quandle of one element \cite{MR682881}.

\begin{lem}[McCarron]
    \label{lem:primitive_is_simple}
    Let $X$ be a finite quandle and suppose that $\Inn(X)$ acts primitively on
    $X$. Then $X$ is simple. 
\end{lem}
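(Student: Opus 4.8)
The plan is to translate the quandle-theoretic notion of a quotient into the group-theoretic notion of an invariant partition (a system of imprimitivity) for $\Inn(X)$, and then to invoke primitivity. A quotient of $X$ is the image of a surjective quandle homomorphism $\pi\colon X\to Q$, and the fibers of $\pi$ form a partition of $X$. The key observation is that this partition is invariant under $\Inn(X)$. Indeed, since $\pi$ is a homomorphism one has $\pi(\varphi_x(y))=\pi(x\triangleright y)=\pi(x)\triangleright\pi(y)$ for all $x,y\in X$; that is, $\pi\circ\varphi_x=\psi_{\pi(x)}\circ\pi$, where $\psi$ denotes the inner maps of $Q$. Because each $\psi_{\pi(x)}$ is a bijection of $Q$, it follows that $\varphi_x$ carries the fiber over $q$ bijectively onto the fiber over $\psi_{\pi(x)}(q)$, so $\varphi_x$ permutes the fibers of $\pi$. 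As $\Inn(X)$ is generated by the maps $\varphi_x$, the partition into fibers is $\Inn(X)$-invariant.

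Once this is established, I would finish as follows. Since $\Inn(X)$ acts primitively, and in particular transitively, on $X$, the only $\Inn(X)$-invariant partitions of $X$ are the two trivial ones: the partition into singletons and the partition with a single block. If the fibers of $\pi$ are all singletons, then $\pi$ is injective, hence an isomorphism, so $Q\cong X$. If there is a single fiber, then $Q$ has one element and is the trivial quandle. Therefore the only quotients of $X$ are $X$ itself and the one-element quandle, which is exactly the assertion that $X$ is simple.

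The argument is essentially a dictionary between quandle congruences and invariant partitions, so no single step is a serious obstacle; the only point that needs care is the verification that the fibers really form an $\Inn(X)$-invariant partition rather than merely being permuted up to refinement. This is handled cleanly by the intertwining relation $\pi\circ\varphi_x=\psi_{\pi(x)}\circ\pi$ together with the injectivity of the maps $\psi_{\pi(x)}$, which shows that each $\varphi_x$ sends fibers onto fibers and avoids any appeal to finiteness. One should also keep in mind that primitivity is defined to include transitivity, so that the dichotomy into the two trivial partitions is genuinely exhaustive.
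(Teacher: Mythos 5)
Your proof is correct and follows essentially the same route as the paper: both arguments show that the fibers of a quandle quotient map form a system of blocks for $\Inn(X)$ and then invoke primitivity. The paper phrases this contrapositively (a proper quotient yields a nontrivial block system, contradicting primitivity), while you argue directly that the only invariant partitions are the trivial ones, but the mathematical content is identical.
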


\begin{proof}
    Suppose that $X$ is not simple. Then there exist a nontrivial quandle
    $Q\ne X$ and a surjective homomorphism of quandles $p\colon X\to Q$.
    Consider the equivalence relation over $X$ given by $x\equiv y$ if and only
    if $p(x)=p(y)$.  We claim that the orbits of this action form a system of
    blocks for $\Inn(X)$. To prove our claim let $x\in X$ and 
		\[
		\Delta_{x}=\{y\in
    X\mid p(x)=p(y)\}
		\]
		be an equivalence class. Then
    $\varphi_y\cdot\Delta_x=\Delta_{\varphi_y(x)}$ for all $y\in X$ and hence
    $f\cdot\Delta_x=\Delta_{f(x)}$ for all $f\in\Inn(X)$.  Thus
    $f\cdot\Delta_x$ is also an equivalence class and therefore
    $f\cdot\Delta_x\cap \Delta_x=\emptyset$ or $f\cdot \Delta_x=\Delta_x$. This
    implies that $\Inn(X)$ is not primitive.
\end{proof}

Following \cite[Definition~3.5]{MR3127819}, we say that a quandle $X$ is of
\emph{cyclic type} (or \emph{cyclic}) if for each $x\in X$ the permutation
$\varphi_x$ acts on $X\setminus\{x\}$ as a cycle of length $|X|-1$, where $|X|$
denotes the cardinality of $X$. Tamaru proved that quandles of cyclic type are
doubly transitive \cite[Prop.~3.6]{MR3127819}. In particular, quandles of
cyclic type are indecomposable.

\begin{exa}[Alexander quandles]
    \label{exa:alexander}
		Alexander quandles form an important family of examples.  Let $A$ be an
		abelian group and $g\in\Aut(A)$. Then $A$ is a quandle with
		$x\triangleright y=(1-g)(x)+g(y)$ for all $x,y\in A$. This is 
		the \emph{Alexander quandle} of type $(A,g)$. 
\end{exa}

\begin{exa}
    Let us mention a particular case of Example~\ref{exa:alexander}.  Let $p$
    be a prime number, $m\in\N$, $q=p^m$, and $\F_q$ be the field of $q$
    elements. For each $\alpha\in\F_q$ the \emph{Alexander quandle} of type
    $(q,\alpha)$ is the quandle structure over $\F_q$ given by $x\triangleright
    y=(1-\alpha)x+\alpha y$ for all $x,y\in\F_q$.  
\end{exa}

\section{Proofs of Theorem~\ref{thm:KTW} and Corollary~\ref{cor:cyclic_is_affine}}
\label{section:KTW}

Using Alexander quandles, H. Tamaru proved the existence of quandles of cyclic
type with a prime number of elements, see \cite[\S4]{MR3127819}.  We use
Tamaru's method  to prove a similar result.  

Recall that for any power $q$ of a prime number, the multiplicative subgroup of
$\F_q$ is cyclic of order $q-1$.  

\begin{pro}
    \label{pro:affine_cyclic}
    Let $p$ be a prime number, $m\in\N$ and $q=p^m$. Let $\alpha\in\F_q$ and
    $X$ be an Alexander quandle of type $(q,\alpha)$. Then $X$ is of cyclic
    type if and only if $\alpha$ has order $q-1$. 
\end{pro}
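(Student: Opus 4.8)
The plan is to compute the cycle structure of each map $\varphi_x$ directly and read off exactly when it is a single $(q-1)$-cycle. For the Alexander quandle of type $(q,\alpha)$ we have $\varphi_x(y)=x\triangleright y=(1-\alpha)x+\alpha y$, an affine map of $\F_q$. For $\varphi_x$ to be bijective (as the quandle axioms require) the map $y\mapsto\alpha y$ must be an automorphism of $(\F_q,+)$, so $\alpha\in\F_q^\times$; moreover, when $\alpha\neq1$ the point $x$ is the unique fixed point of $\varphi_x$, consistent with $x\triangleright x=x$.

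First I would remove the affine part by conjugating with the translation $T\colon z\mapsto z+x$. A short computation shows $T^{-1}\varphi_x T(z)=\alpha z$, so $\varphi_x$ is conjugate to the multiplication map $m_\alpha\colon z\mapsto\alpha z$ on $\F_q$. Since conjugation preserves cycle type, $\varphi_x$ acts on $X\setminus\{x\}$ as a single cycle of length $q-1$ if and only if $m_\alpha$ acts on $\F_q\setminus\{0\}=\F_q^\times$ as a single cycle of the same length. Note that the conjugating translation depends on $x$, but the resulting map $m_\alpha$ does not, so the analysis is uniform over all $x$.

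The final step is to determine the orbits of $m_\alpha$ on $\F_q^\times$. The orbit of $z$ under $m_\alpha$ is the coset $z\langle\alpha\rangle$, so every orbit has length equal to the order of $\alpha$ and the number of orbits is $(q-1)/\mathrm{ord}(\alpha)$. Hence $m_\alpha$ is a single $(q-1)$-cycle precisely when $\mathrm{ord}(\alpha)=q-1$, i.e.\ when $\alpha$ generates the cyclic group $\F_q^\times$. Combining this with the previous paragraph gives the desired equivalence. I do not expect any serious obstacle: the only step requiring care is verifying the conjugation identity $T^{-1}\varphi_x T=m_\alpha$, after which the conclusion follows from the standard description of the orbits of a cyclic action together with the fact, recalled above, that $\F_q^\times$ is cyclic of order $q-1$.
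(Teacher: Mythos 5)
Your proof is correct, and it takes a more self-contained route than the paper's. The key computation checks out: conjugating $\varphi_x(y)=(1-\alpha)x+\alpha y$ by the translation $T\colon z\mapsto z+x$ gives $T^{-1}\varphi_x T = m_\alpha$, and since $T$ carries $0$ to $x$, the cycle type of $\varphi_x$ on $X\setminus\{x\}$ equals that of $m_\alpha$ on $\F_q^{\times}$; the orbit count $(q-1)/\mathrm{ord}(\alpha)$ then settles both directions at once, uniformly in $x$. The paper instead verifies the cycle condition only at the single point $x=0$ (where $\varphi_0^k(1)=\alpha^k$ directly gives the forward implication), and for the converse it invokes two external results: that Alexander quandles of this type have no nontrivial subquandles (hence are indecomposable), and that an indecomposable quandle in which \emph{one} $\varphi_x$ is a $(q-1)$-cycle is automatically of cyclic type. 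Your conjugation argument replaces both citations with an elementary two-line calculation, which is arguably cleaner here; the paper's route has the mild advantage of illustrating the general principle that for indecomposable quandles the cyclic condition need only be tested at one point, a fact it reuses later (in the symplectic case of Theorem~\ref{thm:2transitive_is_alexander}).
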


\begin{proof}
    Suppose first that $X$ is of cyclic type. Then $\varphi_0$ acts on
    $X\setminus\{0\}$ as a cycle of length $q-1$. Thus 
    \[
    \varphi_0=\left(1\;\varphi_0(1)\;\varphi_0^2(1)\cdots\varphi_0^{q-2}(1)\right)
    \] 
    and $\varphi_0^i(1)\ne\varphi_0^j(1)$ for $i,j\in\{0,\dots,q-2\}$ with
    $i\ne j$.  Since $\varphi_0^k(1)=\alpha^k$ for all $k\in\{0,\dots,q-2\}$,
    the claim follows.

    Conversely, suppose that $\alpha$ has order $q-1$.  Since $X$ has no
    nontrivial subquandles by \cite[Prop.~4.1]{MR2799090}, it follows that $X$
    is indecomposable.  The permutation $\varphi_0$ acts on $X$ as the cycle
    $(1\,\alpha\,\alpha^2\cdots\alpha^{q-2})$ of length $q-1$.  Since $X$ is
    indecomposable, this implies that $X$ is of cyclic type by
    \cite[Prop.~3.9]{MR3127819}.
\end{proof}

Now we prove that the cardinality of a finite quandle of cyclic type is some
power of a prime number.  For that purpose, we need some basic properties of
Frobenius groups.  A finite group $G$ acting on a finite set $X$ is a
\emph{Frobenius group} if $G_x\cap G_y=1$ for all $x,y\in X$ with $x\ne y$,
where $G_x$ and $G_y$ denote the stabilizer (or isotropy) subgroups of $x$ and
$y$ respectively. The \emph{degree} of $G$ is the cardinality of $X$.  

It follows from the definition that the center of a Frobenius group is trivial.
The following result is a consequence of \cite[Thm.~5.1]{MR0183775} and
\cite[Thm.~11.3(a)]{MR0183775}.

\begin{thm}
    \label{thm:n=p^m}
    Let $G$ be a doubly transitive Frobenius group of degree $n$. Then $n=p^m$ for
    some prime number $p$ and $m\in\N$.    
\end{thm}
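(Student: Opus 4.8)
The plan is to reduce the statement to the classical structure theory of Frobenius groups together with an elementary argument on element orders. Since $G$ is doubly transitive of degree $n$ with trivial two-point stabilizers, a point stabilizer $G_x$ acts regularly on the remaining $n-1$ points, so $|G_x|=n-1$ and $|G|=n(n-1)$. A non-identity element of $G$ fixes at most one point, because two-point stabilizers are trivial; counting the non-identity elements distributed among the $n$ stabilizers $G_x$ then shows that exactly $n-1$ elements of $G$ are fixed-point-free. By Frobenius's theorem (\cite[Thm.~5.1]{MR0183775}), these together with the identity form a normal subgroup $N$, the \emph{Frobenius kernel}, of order $n$.

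Next I would observe that $N$ acts regularly on $X$: every non-identity element of $N$ is fixed-point-free, so $N$ acts semiregularly, and since $|N|=n=|X|$ the action is in fact regular. Fixing a base point and identifying $X$ with $N$, the point stabilizer $H=G_x$ then acts on $N$ by conjugation, so $H\leq\Aut(N)$, and it fixes only the identity of $N$. By double transitivity $H$ acts transitively on $X\setminus\{x\}$, which under the identification corresponds to $H$ acting transitively on $N\setminus\{1\}$; this is the content of \cite[Thm.~11.3(a)]{MR0183775}.

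The key remaining step is the order argument. Since $H$ acts on $N$ by automorphisms, and automorphisms preserve the order of elements, transitivity of $H$ on $N\setminus\{1\}$ forces all non-identity elements of $N$ to share a common order $r$. This $r$ must be prime: if a prime $\ell$ divides $r$, then for any $g\in N\setminus\{1\}$ the element $g^{r/\ell}$ is a non-identity element of order $\ell$, whence $\ell=r$. Writing $p=r$, every non-identity element of $N$ has order $p$, so by Cauchy's theorem $p$ is the only prime dividing $|N|$. Therefore $n=|X|=|N|=p^m$ for some $m\in\N$.

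The main obstacle is not any of these steps individually but the invocation of Frobenius's theorem in the first paragraph: the existence of the Frobenius kernel as a normal subgroup has no known elementary proof and ultimately rests on character theory, which is why I would cite \cite[Thm.~5.1]{MR0183775} rather than attempt to reprove it. Everything downstream—the regularity of $N$, the conjugation action of the complement, and the final order-plus-Cauchy argument—is elementary and self-contained.
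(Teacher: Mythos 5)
Your proof is correct. The paper itself gives no argument at all: it simply declares the theorem to be a consequence of Passman's Theorem~5.1 (Frobenius's kernel theorem) and Theorem~11.3(a) (the structure of doubly transitive Frobenius groups). You rely on the same essential black box --- the existence of the normal Frobenius kernel $N$ consisting of the identity together with the $n-1$ fixed-point-free elements, which indeed has no character-free proof --- but you then replace the second citation by a short self-contained derivation: the complement $G_x$ acts on $N$ by conjugation (i.e.\ by automorphisms), double transitivity makes this action transitive on $N\setminus\{1\}$, so all nontrivial elements of $N$ share a common order, that order must be a prime $p$, and Cauchy's theorem then forces $|N|=n$ to be a power of $p$. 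All the supporting computations check out: $|G|=n(n-1)$ and the count of exactly $n-1$ fixed-point-free elements follow from regularity of $G_x$ on $X\setminus\{x\}$ (transitivity from double transitivity, freeness from triviality of two-point stabilizers), semiregularity plus $|N|=n$ gives regularity of $N$ on $X$, and faithfulness of the conjugation action of $G_x$ on $N$ follows from faithfulness of the permutation action. In effect you have reproved the relevant part of Passman's Theorem~11.3 rather than quoting it, which makes the only genuinely deep ingredient (Frobenius's theorem) explicit; this is a reasonable and slightly more transparent way to present the result than the paper's bare double citation.
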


We shall also need the following two lemmas.

\begin{lem}
    \label{lem:cyclic_stabilizer}
    Let $X$ be a finite quandle of cyclic type, $x\in X$, and $G=\Inn(X)$. Then
    $G_x$ is cyclic and generated by $\varphi_x$. 
\end{lem}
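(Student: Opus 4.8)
The plan is to show that $G_x$ coincides with the cyclic subgroup $\langle\varphi_x\rangle$. First I would record the basic data supplied by the hypothesis: since $X$ is of cyclic type, $\varphi_x$ fixes $x$ and acts on $X\setminus\{x\}$ as a single cycle of length $n-1$, where $n=|X|$. In particular $\varphi_x$ has order $n-1$, and $\langle\varphi_x\rangle\subseteq G_x$ because $\varphi_x$ fixes $x$. It therefore suffices to prove the reverse inclusion $G_x\subseteq\langle\varphi_x\rangle$, and this will automatically give that $G_x$ is cyclic of order $n-1$.

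The key tool is Lemma~\ref{lem:nontrivial_center}, which guarantees that $\varphi_x$ is central in $G_x$; hence every $g\in G_x$ commutes with $\varphi_x$. I would then pass to the action on $X\setminus\{x\}$. Each $g\in G_x$ fixes $x$ and so restricts to a permutation $\bar g$ of $X\setminus\{x\}$, and the restriction map $G_x\to\mathrm{Sym}(X\setminus\{x\})$ is injective because $\Inn(X)$ is, by definition, a group of permutations of $X$ and hence acts faithfully. Under this map $\bar g$ commutes with $\bar\varphi_x$, the restriction of $\varphi_x$, which by the cyclic-type hypothesis is a full $(n-1)$-cycle on the $(n-1)$-element set $X\setminus\{x\}$.

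The main point is now the elementary fact that the centralizer of an $m$-cycle in the symmetric group on $m$ letters is exactly the cyclic group it generates; indeed, the conjugacy class of $m$-cycles has size $(m-1)!$, so by orbit--stabilizer the centralizer has order $m$ and therefore equals $\langle\text{cycle}\rangle$. Applying this with $m=n-1$ gives $\bar g\in\langle\bar\varphi_x\rangle$, say $\bar g=\bar\varphi_x^{\,j}$. Since $g$ and $\varphi_x^{\,j}$ then agree on $X\setminus\{x\}$ and both fix $x$, faithfulness forces $g=\varphi_x^{\,j}\in\langle\varphi_x\rangle$. This establishes $G_x\subseteq\langle\varphi_x\rangle$, hence $G_x=\langle\varphi_x\rangle$ is cyclic, generated by $\varphi_x$.

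I expect the only nonroutine ingredient to be the centralizer computation in the symmetric group, and even that is entirely standard; the rest is bookkeeping about the restricted action together with a single direct appeal to the centrality of $\varphi_x$ furnished by Lemma~\ref{lem:nontrivial_center}. It is worth noting that the argument uses the full strength of the cyclic-type condition precisely at the step where $\bar\varphi_x$ must be a single cycle, since the centralizer fact fails for permutations with several nontrivial cycles.
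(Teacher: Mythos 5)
Your proof is correct and follows essentially the same route as the paper's: both reduce to the observation that $G_x$ centralizes $\varphi_x$ (you cite Lemma~\ref{lem:nontrivial_center}; the paper recomputes $f\varphi_xf^{-1}=\varphi_{f(x)}=\varphi_x$ inline) and then identify the centralizer of the $(n-1)$-cycle with $\langle\varphi_x\rangle$. The only cosmetic difference is that you restrict to $X\setminus\{x\}$ and compute the centralizer in $\mathrm{Sym}(X\setminus\{x\})$, whereas the paper computes it directly in $\Sym_n$; the content is identical.
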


\begin{proof}
    Assume that $X$ has $n$ elements. Then $G$ is a subgroup of $\Sym_n$. Since 
    \[
        f\varphi_xf^{-1}=\varphi_{f(x)}=\varphi_x
    \]
    for all $f\in G_x$, we conclude that $G_x\subseteq
    C_G(\varphi_x)$, where $C_G(\varphi_x)$ denotes the centralizer of
    $\varphi_x$ in $G$. The permutation $\varphi_x$ is a cycle of length $n-1$.
    Hence \[
		C_G(\varphi_x)=C_{\Sym_n}(\varphi_x)\cap G=\langle\varphi_x\rangle
		\]
    and therefore $G_x=\langle\varphi_x\rangle$.
\end{proof}

\begin{lem}
    \label{lem:frobenius}
    Let $n\geq3$ and $X$ be a quandle of cyclic type of size $n$. Then
    $\Inn(X)$ is a Frobenius group of degree $n$.
\end{lem}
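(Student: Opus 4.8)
The plan is to verify the Frobenius condition directly from the definition, namely that $G_x\cap G_y=1$ for all distinct $x,y\in X$, where $G=\Inn(X)$. The degree is automatic, since it equals $|X|=n$, and transitivity is free of charge because quandles of cyclic type are doubly transitive, hence indecomposable; so the only substantive task is establishing the trivial-intersection property of the point stabilizers.

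First I would invoke Lemma~\ref{lem:cyclic_stabilizer} to pin down the stabilizers: for every $x\in X$ one has $G_x=\langle\varphi_x\rangle$, and since $\varphi_x$ acts on $X\setminus\{x\}$ as a single cycle of length $n-1$, this stabilizer is cyclic of order $n-1$. In particular, because $n\geq3$, each $G_x$ is nontrivial, which is needed for $G$ to be a genuine (non-regular) Frobenius group.

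Next I would fix distinct points $x,y\in X$ and take an arbitrary $f\in G_x\cap G_y$. Writing $f=\varphi_x^{k}$ for some integer $k$ (possible since $f\in G_x=\langle\varphi_x\rangle$), the crucial feature is that $f$ fixes the point $y\neq x$, i.e.\ $f$ fixes a point in the support $X\setminus\{x\}$ of the cycle $\varphi_x$. The key step is the elementary permutation-theoretic fact that a power $\sigma^{k}$ of an $m$-cycle $\sigma$ fixes a point of the support of $\sigma$ only when $m\mid k$, in which case $\sigma^{k}$ is the identity on that support. Applying this with $\sigma=\varphi_x$ and $m=n-1$, I conclude that $(n-1)\mid k$, and therefore $f=\varphi_x^{k}=\id$.

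Thus $G_x\cap G_y=1$ for all $x\neq y$, so $G=\Inn(X)$ is a Frobenius group of degree $n$. I expect the only delicate point to be the permutation-theoretic observation isolating why a nonidentity power of an $(n-1)$-cycle cannot fix any point other than $x$; once the stabilizer description from Lemma~\ref{lem:cyclic_stabilizer} is in hand, the remainder is routine bookkeeping.
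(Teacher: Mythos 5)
Your proof is correct and follows essentially the same route as the paper: both rest on Lemma~\ref{lem:cyclic_stabilizer} to identify $G_x=\langle\varphi_x\rangle$ and then on the observation that a nontrivial power of an $(n-1)$-cycle fixes no point of its support, so an element of $G_x\cap G_y$ must be trivial. The paper phrases the intersection element as $g\varphi_x^kg^{-1}=\varphi_{g(x)}^k=\varphi_x^l$ before drawing the same conclusion, but this is only a cosmetic difference from your more direct bookkeeping.
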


\begin{proof}
    Let $G=\Inn(X)$ and $x\in X$. By Lemma~\ref{lem:cyclic_stabilizer},
    $G_x=\langle\varphi_x\rangle$.  We claim that for each $g\in G\setminus
    G_x$ the subgroups $G_x$ and $gG_xg^{-1}=G_{g(x)}$ have trivial
    intersection. Let $h\in G_x\cap gG_xg^{-1}$ and assume that
    $h=g\varphi_x^kg^{-1}=\varphi_x^l$ for some $k,l\in\{0,\dots,n-2\}$ and
    $g\in G\setminus G_x$. Then
     \[
        \varphi_x^l=g\varphi_x^kg^{-1}=(g\varphi_xg^{-1})^k=\varphi_{g(x)}^k.
    \]
    Let $y\in X\setminus\{x\}$ such that $g(x)=y$. Then
    $\varphi_x^l=\varphi_y^k$. Since $\varphi_y$ is a $(n-1)$-cycle that
    fixes $y$ and $\varphi_y^k(x)=\varphi_x^l(x)=x$, we conclude that $k=0$.
    From this the claim follows.
\end{proof}

Now we prove that for $n\geq3$ there exists a quandle of cyclic type of size $n$
if and only if $n$ is a power of a prime number.  This establishes
\cite[Conjecture~4.7]{KTW}.

\begin{proof}[Proof of Theorem~\ref{thm:KTW}] 
    Assume that $n=p^m$, where $p$ is a prime number and $m\in\N$.  By
    Proposition~\ref{pro:affine_cyclic}, there exists a quandle of cyclic type
    of size $n$.  Conversely, if $X$ is a quandle of cyclic type and size $n$,
    then $\Inn(X)$ is a Frobenius group by Lemma~\ref{lem:frobenius}. Since
    $\Inn(X)$ acts doubly transitively on $X$ by \cite[Prop.~3.6]{MR3127819},
    Theorem~\ref{thm:n=p^m} implies that $n$ is a power of a prime number.
\end{proof}

Theorem~\ref{thm:KTW}, Lemma~\ref{lem:primitive_is_simple} and the
classification of simple quandles of Andruskiewitsch and Gra\~na
\cite[\S3]{MR1994219} yield Corollary~\ref{cor:cyclic_is_affine}.

\begin{proof}[Proof of Corollary~\ref{cor:cyclic_is_affine}]
    Let us assume that $X$ is a cyclic quandle. By Theorem~\ref{thm:KTW}, the
    cardinality of $X$ is some power of a prime number. Since $X$ is
    doubly transitive by \cite[Prop.~3.6]{MR3127819}, it follows that $\Inn(X)$
    acts primitively on $X$. By Lemma~\ref{lem:primitive_is_simple}, $X$
    is simple.  Now \cite[Thm.~3.9]{MR1994219} yields the claim.
\end{proof}
 
\section{Proof of Theorem~\ref{thm:2transitive_is_alexander}}
\label{section:2transitive_is_alexander}

Recall that a \emph{minimal normal} subgroup of $G$ is a normal subgroup $N$ of
$G$ such that $N\ne1$ and $N$ contains no normal subgroup of $G$ except $1$ and
$N$.  The \emph{socle} of $G$ is the product of the minimal normal subgroups of
$G$.  The following theorem goes back to Burnside, see for example
\cite[Thm.~4.3]{MR599634}.

\begin{thm}[Burnside]
    \label{thm:burnside}
    Let $G$ be a doubly transitive group and $N$ be a minimal normal subgroup
    of $G$. Then $N$ is either a regular elementary abelian group, or a
    nonregular nonabelian simple group. 
\end{thm}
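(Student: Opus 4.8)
The plan is to combine two classical ingredients: the fact that a doubly transitive group is primitive, so that its nontrivial normal subgroups are transitive, and the structure theorem for minimal normal subgroups of finite groups. Write $\Omega$ for the underlying set, $n=|\Omega|$, and let $N$ be the given minimal normal subgroup. First I would record that a doubly transitive permutation group is primitive, and that in a primitive group the orbits of a normal subgroup form a system of blocks, so every nontrivial normal subgroup is transitive; since $N\ne1$ this makes $N$ transitive on $\Omega$. Next I would invoke that a minimal normal subgroup of a finite group is characteristically simple, hence a direct product $N\cong T\times\cdots\times T$ of $m$ copies of a single simple group $T$. The argument then splits according to whether $T$ is abelian.

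If $T$ is abelian, then $T\cong\Z/p$ for a prime $p$ and $N$ is an elementary abelian $p$-group. Here I would use the elementary fact that a transitive abelian permutation group is regular: every point stabilizer is normal in $N$ because $N$ is abelian, and all point stabilizers are conjugate in the transitive group $N$, so they coincide; their common value fixes every point and is therefore trivial. Thus $N$ is regular elementary abelian, which is the first alternative.

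If $T$ is nonabelian, I would first show that $N$ is nonregular. Suppose instead that $N$ were regular; identifying $\Omega$ with $N$ so that the stabilized point is the identity, double transitivity says that $G_\omega$ is transitive on $\Omega\setminus\{\omega\}$, that is, $G_\omega$ acts by conjugation transitively on $N\setminus\{1\}$. Then all nonidentity elements of $N$ are conjugate, hence share a common order, which forces that order to be a prime $p$ and $N$ to be a $p$-group; but a nontrivial $p$-group has nontrivial center, and since every nonidentity element is conjugate to a central one, every element is central and $N$ is abelian, a contradiction. Hence $N$ is nonregular.

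It remains to prove that in the nonabelian case $N$ is simple, i.e.\ that $m=1$; this is the step I expect to be the main obstacle. The factors $T_1,\dots,T_m$ are exactly the minimal normal subgroups of $N$, so $G$ permutes them by conjugation, and minimality of $N$ in $G$ makes this action transitive. The hard part is to rule out $m\geq2$: one must exploit that double transitivity is equivalent to permutation rank $2$, analysing the orbits of the nontrivial stabilizer $N_\omega$ together with the commuting normal factors $T_i$ and $\prod_{j\ne i}T_j$ of $N$, and showing that for $m\geq2$ these produce either a nontrivial system of blocks or an intermediate orbit length incompatible with the subdegrees $1$ and $n-1$ of a doubly transitive action. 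With $m=1$ established, $N$ is a nonregular nonabelian simple group, giving the second alternative and completing the proof.
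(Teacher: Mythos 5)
The paper does not actually prove this statement: it is quoted as a classical theorem of Burnside with a pointer to the literature (Cameron's survey), so there is no internal proof to compare against. Judged on its own merits, your attempt is correct and complete in the abelian case and in showing that a nonabelian $N$ cannot be regular: the reduction of a minimal normal subgroup to a characteristically simple group $T\times\cdots\times T$, the fact that a transitive abelian normal subgroup is regular, and the argument that a regular normal subgroup of a doubly transitive group is elementary abelian (all nonidentity elements conjugate under $G_\omega$, hence of a common prime order, hence $Z(N)=N$) are all standard and correctly executed.

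The genuine gap is the final step, which you yourself flag: proving $m=1$ when $T$ is nonabelian. This is the substantive content of Burnside's theorem in the nonabelian case, and you only describe a hoped-for contradiction rather than produce one. Moreover, the two contradictions you name do not materialize as directly as suggested. The orbits of $T_1$ form a system of blocks for $N$, but $T_1$ is not normal in $G$ (only the collection $\{T_1,\dots,T_m\}$ is permuted by $G$), so these orbits are not obviously blocks for $G$ and primitivity of $G$ cannot be applied to them without further work. Likewise, the subdegrees $1$ and $n-1$ constrain the orbits of $G_\omega$, not those of $N_\omega$ or of the individual factors $T_i$, and $N$ itself need not be doubly transitive. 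The standard ways to close this step are either the structure theory of the socle of a primitive group (the product and diagonal configurations force permutation rank at least $3$), as in Dixon and Mortimer, or an argument that first establishes $C_G(N)=1$ (a second minimal normal subgroup would force $N$ to be regular, contradicting your previous step) and then does genuinely more work to exclude $m\geq2$. As written, the proof is incomplete at exactly the point where the theorem has content.
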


As a consequence of the odd analogue of Glauberman Z*-theorem, we prove
that finite doubly transitive quandles are Alexander simple.  The key step is a
group-theoretical result kindly comunicated to us by G. Robinson, see
\verb+http://mathoverflow.net/questions/184682+.  We refer to~\cite[Chapter
6]{MR0231903} for more details.

\begin{proof}[Proof of Theorem~\ref{thm:2transitive_is_alexander}]
	Let $G=\Inn(X)$.  The quandle $X$ is doubly transitive and hence $G$ acts
	primitively on $X$. Then $X$ is simple by
	Lemma~\ref{lem:primitive_is_simple} and therefore $X$ is a conjugacy class
	of $G$ and $G$ has a trivial center by \cite[Lemma~1]{MR682881}. 

	Suppose that $G$ is nonsolvable.  Let $N$ be the commutator subgroup of
	$G$.  Since $N$ is the unique minimal normal subgroup of $G$ by
	\cite[Lemma~2]{MR682881} and $G$ is nonsolvable, it follows from
	Theorem~\ref{thm:burnside} that $N$ is a nonregular nonabelian simple
	group. Hence $G$ is equivalent to a doubly transitive group with simple
	socle. Such groups are classified, see~\cite[Table 7.4]{MR599634}. With
	Lemma~\ref{lem:mccarron} one excludes from~\cite[Table 7.4]{MR599634} the
	groups with transitivity $\geq3$.  Thus we may assume that $G$ is a doubly
	transitive group with $F(G)=1$, where $F(G)$ denotes the Fitting subgroup
	of $G$.  We claim that $Z(G_x)=1$. Suppose that $Z(G_x)\ne1$. Let $p$ be a
	prime number dividing the order of $Z(G_x)$ and let $g\in Z(G_x)$ be an
	element of order $p$.  The case where $p=2$ follows from Glauberman
	Z*-theorem~\cite{MR0202822}, so we may assume that $p$ is odd. The
	permutation action of $G$ on $X$ is
	equivalent to that of $G$ on the conjugacy class of $g$ by conjugation.
	Since the action is doubly transitive and $F(G)=1$, no conjugate of $g$
	other that itself commutes with $g$.  By~\cite[Cor. 2]{MR0197571}, there
	must be a $p'$-subgroup $T$ of $G$ which is normalized, but not centralized
	by $g$.  Hence for some $t \in T$, $g^{-1}t^{-1}gt$ is a nontrivial
	$p'$-element (i.e. an element of order not a multiple of $p$).  Since
	$C_G(g)$ is transitive on the remaining conjugates of $g$, one obtains that
	for all $h\in G$ the order of $g^{-1}h^{-1}gh$ is not a multiple of $p$.
	By~\cite[Thm. D]{MR1239051}, $O_{p'}(G)\ne1$, where $O_{p'}(G)$ denotes the
	largest normal subgroup of $G$ which is a $p'$-group.  Since $G$ is doubly
	transitive on $X$, it follows that $G=O_{p'}(G)C_G(g)$.  From $g\not\in
	Z(G)$ one obtains that there exists a prime number $q\ne p$ and a $q$-Sylow
	subgroup $Q$ of $O_{p'}(G)$ normalized but not centralized by $g$. A
	similar argument and \cite[Thm. D]{MR1239051} prove that
	$[G,g]=[O_{p'}(G),g]$ is a nontrivial $q$-group of $G$ and thus
	$O_q(G)\ne1$, which is a contradiction.

	Since $G$ is the inner group of a quandle, we conclude from
	Lemma~\ref{lem:nontrivial_center} and the previous argument that $G$ is
	solvable.  Since $X$ is a simple quandle and its inner group $G$ is solvable,
	there exists a prime number $p$ and $m\in\N$ such that $X$ is an Alexander
	quandle of size $p^m$ by \cite[Thm.~3.9]{MR1994219}.
\end{proof} 

\section*{Acknowledgement}

The author thanks E. Clark, S. Kamada, A. Lochmann, J. McCarron, H.  Tamaru and
K. Wada for several helpful comments. Special thanks go to G. Bianco for
interesting conversations and to G. Robinson for the group-theoretic argument
used in the proof of Theorem~\ref{thm:2transitive_is_alexander}.  This work is
supported by Conicet, UBACyT 20020110300037, ICTP, and the Alexander von
Humboldt Foundation. 

\def\cprime{$'$}

\end{document}